\journalname{Annals of the Institute of Statistical Mathematics}
\begin{document}

\title{Improper   vs finitely additive distributions as limits  of  countably additive  probabilities}
\subtitle{}

\titlerunning{FAP vs improper limits}        % if too long for running head

\author{Pierre Druilhet$^*$      \and
      Erwan Saint Loubert Bi\'e
}

\authorrunning{Druilhet, Saint Loubert Bi\'e} % if too long for running head

\institute{Pierre Druilhet \at
             Universit\'e Clermont Auvergne, CNRS, LMBP, UMR 6620, Campus des C\'ezeaux, 3 place Vasarely, TSA 60026, CS 60026, 63178 AUBIERE Cedex, FRANCE \\
              \email{pierre.druilhet@uca.fr}           %  \\
%             \emph{Present address:} of F. Author  %  if needed
           \and
           Erwan Saint Loubert Bi\'e \at
             Universit\'e Clermont Auvergne, CNRS, LMBP, UMR 6620, Campus des C\'ezeaux, 3 place Vasarely, TSA 60026, CS 60026, 63178 AUBIERE Cedex,  FRANCE
}

\date{Received: date / Revised: date}
% The correct dates will be entered by the editor

\maketitle

\begin{abstract}
 
 {
 The Bayesian paradigm with proper priors  can be extended either to improper distributions  or to finitely additive probabilities (FAPs). 
Improper distributions and   diffuse FAPs   can be seen as limits of proper distribution sequences for specific   convergence modes.
In this paper, we compare these two kinds of limits.
We show that improper distributions and FAPs   represent two distinct features of the limit behavior of a sequence of proper distribution. More specifically, an improper distribution characterizes the behavior of the sequence inside the domain, whereas diffuse FAPs characterizes  how the mass concentrates on the boundary of the domain.   Therefore, a diffuse FAP cannot be seen as the counterpart of an improper distribution.
As an illustration, we consider several approach to define  uniform FAP distributions on   natural numbers as an equivalent of improper flat prior.  We also show that expected logarithmic convergence may depend on the chosen sequence of compact sets. }
 \keywords{ bayesian statistics \and improper distribution \and finitely additive probability \and q-vague convergence \and uniform distribution \and expected logarithmic convergence.
 \and remote probability}
\end{abstract}

 \section{Introduction}
 
 Improper priors and finitely additive probabilities (FAP) are the  two main extensions of the standard Bayesian paradigm based on  proper priors, i.e. countably additive probabilities \citep[see][p.15]{hart1983}. Both extensions induce  paradoxical phenomena such as strong inconsistency \citep{ston1976,dubi1975} or   marginalization paradoxes \citep{dastzi1973} that do not  occur  with proper priors. To have a better understanding of these phenomena, 
   some authors such as  \citet{stone1982} or  \citet[][p.218]{kada1986}, consider improper distributions and FAPs as limits of  proper prior sequences w.r.t. to appropriate topologies. Heuristically, this approach seems to establish a link between improper distributions and FAP. .
  
Seeing a FAP as a limit  is a way to preserve the total mass   equal to 1, while sacrificing the countable additivity. This point of view has been mainly supported by \citet{defi1972}. On the other hand, improper distributions aim at preserving the countable additivity, while  sacrificing  a total mass equal to 1. Improper distributions appear naturally in the framework of conditional probability, see \citet{reny1955} 	and more recently \citet{taralind2010,taralind2016} and \citet{lindtara2018}. Conditional probability spaces are also related to projective spaces of measures \citep{reny1970} which have a natural quotient space topology  and a natural convergence mode, named $q$-vague convergence by \citet{bidr2016}. Bayesian inference with  improper posterior is justified by  \citet{tatuli2018} from a theoretical point of view.
  \citet{ bobidr2018} consider the convergence of proper distribution sequences  to an improper posterior for Bayesian   estimation of abundance   by removal sampling.  \citet*{tulari2018} propose to adapt MCMC for the  estimation of  improper posteriors. In an other approach,  \citet{akai1980} consider the convergence of 
  posterior distributions w.r.t. an entropy criterion when the posterior distributions are proper.

 %%In order to explain some paradoxes such as strongh inconsistency, some  authors include the  likelihood in the definition of a   convergence mode for the priors, for instance by considering the  convergence of the posterior distribution w.r.t. an entropy criterion \citep{akai1980} or w.r.t. an integrated version of this criterion \citep{bebesu2009} when the posterior is proper.  
 
In this paper, we mainly   consider convergence   of prior distributions to FAPs or to improper distributions, regardless to any statistical model.  In Section \ref{section.defcv}, we define the notion of limits in the settings of improper distributions and of FAPs. We show that improper distributions and FAPs  represent two distinct characteristics of a sequence of proper distributions. Therefore, they cannot be connected by the mean of proper distribution sequences.  In Section \ref{section.uniform}, we revisit the notion of uniform distribution on integers in the light of our results. In Section \ref{section.examples}, we illustrate with some examples the fundamental difference between convergence to an improper prior and to a  FAP. In Section \ref{section.elc}, we consider expected logarithmic convergence, defined by \cite{bebesu2009} to approximate an improper distribution by a sequence of truncated proper priors.  We apply some of the methods used in Section   \ref{section.defcv} to propose an example where this convergence mode depends on the chosen  sequence of compact sets.

 \section{Convergence  of probability  sequences}
 \label{section.defcv}
 We denote by $\mathcal C_b$ the set of continuous real-valued  bounded functions on a space  $\Theta$ and  by $\mathcal C_K$ the set of continuous real-valued functions with compact support.
 For a $\sigma$-finite measure $\pi$, we denote $\pi(f)=\int f(\theta) \;d\pi(\theta)$.
 Let  $\{\pi_n\}_{n\in \mathbb N}$ be a sequence of proper distributions.  The usual   converge mode of  $\{\pi_n\}_{n\in \mathbb N}$  to a proper prior $\pi$ is the narrow   convergence,  also called weak convergence or convergence in law, defined by:
 \begin{equation}
 \label{eq.cvnarrow}
 \pi_n \xrightarrow[n\rightarrow +\infty]{narrowly} \pi\iff \pi_n(f) \xrightarrow[n\rightarrow +\infty]{}\pi(f)\quad \forall f\in\mathcal C_b \; .
 \end{equation}
 
 When it exists, the narrow limit of $\{\pi_n\}_n$ is necessarily unique. In this section, we consider two alternative convergence modes when there is no narrow limit, and especially    when the total  mass tends to concentrate around  the boundary on the domain, more precisely when $\lim_n \pi_n(f)=0$ for all $f$ in $\mathcal C_K$. The idea is to consider a proper prior either as a special case of FAP or as a special case of  a Radon measure, and for each case, to define  a convergence mode in a formalized way.  
 
 %%In both cases, the limit is not unique in general but, as a requirement,  must coincide with the narrow convergence when Eq. (\ref{eq.cvnarrow})  holds.
 
 ~

 In the following,   $\Theta$ is a  locally compact separable metric  space.   This is the case, for example, for usual topological finite-dimensional  vector spaces or for denumerable sets with the discrete topology.  In the latter case, any  function is continuous and a compact set is a finite set.  
 
 \subsection{Convergence to an improper distribution}
 To extend the notion of   narrow limits, we consider here  proper distributions  within the set of projective space of positive Radon measures as follows: we denote by $\mathcal R$ the set of non-null Radon measures, that is     regular countably additive measures with finite mass  on each compact set.  Note that, in the discrete case,  any $\sigma$-finite measure is a Radon measure. 
 
 ~

 We define an improper distribution as an unbounded  Radon measure which appears  in parametric Bayesian statistics  \citep[see, e.g.][]{jeff1983}.
 The  \textit{projective space} $\overline{\mathcal R}$
 associated to $\mathcal R$ is the quotient space for the equivalence relation $\sim$  defined by $\pi_1\sim\pi_2$ iff $\pi_2=\alpha\, \pi_1$ for some positive scalar factor $\alpha$. To each Radon measure $\pi$   is associated a unique equivalence class $\overline\pi=\{\pi'=\alpha\,\pi\,;\,\alpha>0\}$. Therefore, a projective space is a space where  objects are defined up to a positive scalar factor. It is natural in Bayesian statistics  to consider such projective space since   two equivalent priors give the same posterior.    The projective space $\overline{\mathcal R}$ is also naturally linked with  conditional probability spaces     \citep
 {reny1955}. 
 All the results presented below on the convergence mode w.r.t. to the projective space $\overline{\mathcal R}$ can be found in \citet{bidr2016}. The usual topology on $\mathcal R$ is the vague topology defined by 
 \begin{equation}
 \label{eq.cvvague}
 \pi_n \xrightarrow[n\rightarrow +\infty]{vaguely} \pi\iff \pi_n(f) \xrightarrow[n\rightarrow +\infty]{}\pi(f)\quad \forall f\in\mathcal C_K\ .
 \end{equation}

 From  the related  quotient topology, we can derive a convergence mode,  called q-vague convergence: a sequence  $\{\pi_n\}_n$ in $\mathcal R$ converges $q-vaguely$ to a (non-null) improper distribution $\pi$ in $\mathcal R$  if $\overline \pi_n$ converges to $\overline\pi$ w.r.t.  the quotient topology where $\overline \pi_n$ and $\overline \pi$ are the equivalence classes associated to $\pi_n$ and  $ \pi$. The  limit $\overline \pi$  is unique whereas $\pi$ is unique only  up to   a positive scalar factor. It is not always tractable  to   check a convergence in the  quotient space. However, there is  an equivalent definition in the initial space $\mathcal R$:  $\{\pi_n\}_n$   converges $q-vaguely$ to  $\pi$ if there exists some scalar factors $\alpha_n$ such that $\{\alpha_n\,\pi_n\}_n$ converges vaguely to $\pi$:
 \begin{equation}
 \label{eq.cvqvague2}
 \pi_n \xrightarrow[n\rightarrow +\infty]{q-vaguely} \pi\iff a_n\pi_n \xrightarrow[n\rightarrow +\infty]{vaguely}\pi\quad   \textrm{for some } a_1,a_2,...>0\, .
 \end{equation}

 The $q$-vague convergence can be considered as an extension of the narrow convergence in the sense that if $\{\pi_n\}_n$ and  $\pi$  are proper distributions and  $\{\pi_n\}_n$ converges narrowly to $\pi$ then   $\{\pi_n\}_n$ converges q-vaguely to $\pi$. Note that the converse part holds if and only if $\{\pi_n\}_n$ is tight \citep[see][Proposition 2.8]{bidr2016}.
 
 ~

 When a sequence $\{\pi_n\}_n$ of proper distributions converges q-vaguely to an improper distribution, then  $\lim_n\pi_n(K)=0$ for any compact $K$ \citep[Prop. 2.11]{bidr2016}.  The following lemma gives an apparently stronger, but in fact equivalent, result. It will be useful to establish our main result and to construct examples in Section \ref{section.exampleFAPvsqvague} and 5.
 
 \begin{lemma}\label{lemma.suitecpctslow}
 	Let $\{\pi_n\}_n$ be a sequence of proper distributions such that $\lim_n\pi_n(K)=0$ for any compact $K$. Then there exists a non-decreasing  sequence of compact sets $K_n$ such that $\cup_n K_n=\Theta$ and  $\lim_n\pi_n(K_n)=0$. Moreover, $K_n$ may be chosen such that, for any compact $K$, there exists an integer $N$ such that $K\subset K_N$.
 \end{lemma}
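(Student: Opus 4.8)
The plan is to build the sets $K_n$ from a single fixed compact exhaustion of $\Theta$ and then to slow down the speed at which one climbs along this exhaustion, driven by the hypothesis through a diagonal argument. First I would exploit the topological assumptions: being metric, locally compact, second countable and Hausdorff, $\Theta$ is $\sigma$-compact and admits an exhaustion by compact sets $C_1\subseteq C_2\subseteq\cdots$ with $C_m\subseteq \operatorname{int} C_{m+1}$ and $\bigcup_m C_m=\Theta$. The crucial consequence is that \emph{every} compact $K\subseteq\Theta$ is contained in some $C_M$: the increasing open sets $\operatorname{int} C_{m+1}$ cover $K$, so by compactness $K\subseteq C_{M+1}$ for some $M$.

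Next, for each fixed $m$ the set $C_m$ is compact, so the hypothesis gives $\pi_n(C_m)\to 0$ as $n\to\infty$. Hence I can choose an integer $N_m$ with $\pi_n(C_m)\le 1/m$ for all $n\ge N_m$, and, enlarging the $N_m$ if necessary, arrange that $N_1<N_2<N_3<\cdots$ is strictly increasing, so that in particular $N_m\to\infty$.

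Then I would set $K_n:=C_m$ whenever $N_m\le n<N_{m+1}$ (and $K_n:=C_1$ for $n<N_1$), and check the four required properties. Monotonicity: as $n$ grows the associated index $m$ is non-decreasing and the $C_m$ are nested, so $\{K_n\}$ is non-decreasing. Exhaustion: each $C_m$ equals $K_{N_m}$, whence $\bigcup_n K_n\supseteq\bigcup_m C_m=\Theta$. The ``moreover'' clause: given a compact $K$, pick $M$ with $K\subseteq C_M$; then for $n\ge N_M$ one has $K\subseteq C_M\subseteq K_n$, so $N:=N_M$ works. Finally the convergence: if $N_m\le n<N_{m+1}$ then $K_n=C_m$ and $n\ge N_m$, so $\pi_n(K_n)=\pi_n(C_m)\le 1/m$; since $N_m\to\infty$, the index $m=m(n)$ tends to $\infty$ as $n\to\infty$, giving $\pi_n(K_n)\to 0$.

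The only real obstacle is the tension built into the statement: $K_n$ must grow fast enough that the $K_n$ cover $\Theta$ and eventually absorb every compact, yet slowly enough that $\pi_n(K_n)$ still tends to $0$. Anchoring the construction to one fixed exhaustion resolves the geometric half (the union and the ``moreover''), while the diagonal choice of the thresholds $N_m$ controls the analytic half; the compatibility of the two is precisely what forces $m(n)\to\infty$ along the construction, and this is the step I would verify with the most care.
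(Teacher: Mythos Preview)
Your proof is correct and follows essentially the same diagonal construction as the paper: fix a compact exhaustion, choose strictly increasing thresholds $N_m$ so that $\pi_n$ of the $m$th set is at most $1/m$ for $n\ge N_m$, and set $K_n$ equal to the $m$th set on the block $[N_m,N_{m+1})$. The only cosmetic difference is that you derive the ``every compact is eventually absorbed'' property directly from $C_m\subseteq\operatorname{int}C_{m+1}$, whereas the paper cites a lemma from Bauer for this.
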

 \begin{proof}
 	Let $\widetilde K_m$, $m\geq 1$, be an increasing sequence of compact sets with $\cup_m \widetilde  K_m=\Theta$. For each $m $, $\lim_n\pi_n(\widetilde K_m)=0$, so  there exists an integer $N_m$ such that $N_m>N_{m-1}$ and  $\pi_n(\widetilde K_m)\leq 1/m$ for $n>N_m$. Consider now such a sequence of integers $N_m$, $m\geq 1$. For any $n$ there exists a unique integer  $m$ such that  $N_m\leq n <N_{m+1}$. We define $K_n$ by $K_n=\widetilde K_m$. So,  $\pi_n(K_n)=\pi_n(\widetilde K_{m})\leq 1/m$. Since $m$ increases with $n$, $\lim_n\pi_n(K_n)=0$. Furthermore, the sequence $\widetilde K_m$ can be chosen such that, for any compact $K$, $K$ is a subset of all but finitely many $\widetilde K_m$, \citep[see  e.g.][Lemma 29.8]{bauer2001}. By construction, the same property holds for  the sequence $K_n$. \qed
 \end{proof}
 
 Note that,  $\lim_n\pi_n(K)=0$ for any compact set $K$ does not imply that $\{\pi_n\}_n$ converge q-vaguely, as shown in Section \ref{section.examples} on some examples.
 \subsection{Convergence   to a FAP}
 \label{section.FAPlimit}

Here, we consider proper distributions as special cases of FAPs.
Denote by $\mathcal F_b$  the set of bounded real-valued measurable  functions on $\Theta$.  A FAP $\pi $ is a linear functional on $\mathcal F_b$  which is positive, { i.e.} $\pi(f)\geq 0$ if $f\geq 0$, and which satisfies  $\pi(1)=1$. Therefore, the set of FAPs is included in the topological dual of $\mathcal F_b$ 
 equipped with the sup-norm. For any measurable set $E\subset \Theta$, we define  $\pi(E)=\pi(\mathds 1_E)$, where $\mathds 1_E(x)=1$ if $x\in E$ and 0 otherwise. We also denote $\int f(\theta) \;d\pi(\theta)=\pi(f)$.

For most authors \citep[see, e.g.][]{heathsudd1978}, a FAP is a linear functional on the set of bounded real valued functions. Here, we do impose a  measurability condition, since we require   proper distributions to be special cases of FAPs. 
 In the case where  $\Theta$ is a denumerable set equipped with the usual discrete topology, any function or set is measurable, and so, both definitions of FAPs are equivalent. 
 
 ~

 Let $\{\pi_n\}_n$ be a sequence of FAPs.
The usual convergence mode for FAPs is associated to the weak* topology: a sequence $\{\pi_n\}_n$ converges to $\pi$ if $\lim_n \pi_n(f) =\pi(f)$ for any $f\in \mathcal F_b$. 

When $\{\pi_n\}_n$ does not converge, we may consider limit points, as proposed by \citet{stone1982} for denumerable sets and extended here to more general sets. The existence     of   limit points relies on the  Banach-Alaoglu-Bourbaki theorem   \citep[see, e.g.][]{rudi1991}, since   a FAP belongs to the unit ball in  the dual of $\mathcal F_b$, which  is compact for the weak$^*$-topology. Hence, for any  sequence $\{ \pi_n\}_n$ of  FAPs,  there exists at least one limit point $\pi$ which is defined as a FAP limit.  We recall that $\pi$ is a limit point of $\{\pi_n\}_n$ for the weak$^*$-topology if and only if for any   integer  $p$, any  $f_1,...f_p$ in $\mathcal F_b$ and any $\varepsilon>0$, there exists  an  infinite number of  $n$ such 
that $\left|\pi_n(f_i)-\pi(f_i)\right|\leq \varepsilon$, $i=1,...,p$. Since $\mathcal F_b$ is not in general first-countable, there does not necessarily exist a subsequence $\{\pi_{n_k}\}_k$ that converges to $\pi$.  We can only say that, for any $f_1,..., f_p$ in $\mathcal F_b$, there exists a subsequence $\{\pi_{n_k}\}_k$ such that $(\pi_{n_k}(f_1),...,\pi_{n_k}(f_p))$ converges to $(\pi(f_1),...,\pi(f_p))$.

 ~

 When $\pi_n$ and $\pi$ are proper distributions, then 
 $\{\pi_n\}_n$ converging narrowly to  $\pi$ does not imply that   $\pi$ is a FAP limit point of $\{\pi_n\}_n$.  Therefore, unlike $q$-vague limits, FAP limit points  cannot be considered as an extension of the narrow convergence.  
 %This is, somehow, the price to pay, in order to be able to define $\pi(E)$, for any FAP limit, and any measurable set$E$. 
 For example,  consider the proper distributions   $\pi_n=\delta_{\sqrt 2/n}$, where $\delta$ is the Dirac measure. The sequence $\{\pi_n\}_n$ converges narrowly to $\pi=\delta_0$ but $\pi$ is not a FAP limit point  of $\{\pi_n\}_n$. To show this, consider  $f(\theta)=\mathds 1_{\mathbb Q} (\theta)\in\mathcal F_b$, with $\mathbb Q$ the set of rational numbers, we have $\lim_n\pi_n(f)=0\neq \pi(f)=1$. We can only say that any FAP limit point
  of the sequence $\{\pi_n\}_n$ will coincide with $\pi=\delta_0$ on the set ${\cal C}_b$. 
 
 To consider a FAP limit  as an extension of the narrow  convergence, we should have defined FAPs on the space $\mathcal C _b $ rather than $\mathcal F _b$. However, with this choice, $\pi (E)$ is not well defined for all measurable sets  $E$.

In the special case where $\Theta$ is a denumerable set, any real-valued function on $\Theta$ is continuous. 
 So, if  a sequence of proper distributions $\{\pi_n\}_n$ converges narrowly to  a proper distribution $\pi$, then $\pi$ is a FAP limit point.

 ~

Another way to extend the notion of limit can be obtained   by using the Hahn-Banach theorem as follows \citep[see][]{huis2016}:
let $\mathcal S_c$ be the set of $f\in\mathcal F_b$ such that  $\lim_n \pi_n(f)$ exists. 
 A FAP $\pi$ is said to be an extended  FAP limit of $\{\pi_n\}_n$ if
 $\lim_n \pi_n(f) =\pi(f)$ for any $f\in \mathcal S_c$,
 and if $\pi(f) \leq \limsup_{n} \pi_n(f)$.
 The existence of  a FAP $\pi$ satisfying this requirement is guaranteed by the Hahn-Banach theorem \citep[see][]{rudi1991}: define the linear function $\Phi$ on $\mathcal S_c$ by $\Phi(f)=\lim_n \pi_n(f)$ and the sublinear functional $p(f)=\limsup_n \pi_n(f)$. Then, there exists a linear functional $\pi$ on $\mathcal F_b$ that coincides with $\Phi$ on $\mathcal S_c$ and that satisfies  $ \pi(f)\leq p(f)$ on $\mathcal F_b$. 
 The condition $\pi(f)\leq p(f)$ implies that    $\pi$ is a FAP. Conversely, an extended  FAP limit necessarily  satisfies $\pi(f)\leq p(f)$. Replacing    $f$ by  $-f$ gives  $\pi(f)\geq \liminf_n\pi_n(f)$.  Therefore, an extended  FAP limit can be characterized by the  following lemma:

 \begin{lemma}
 	\label{lemma.cvFAP}
 	A FAP $\pi$ is an extended FAP-limit of the sequence $\{\pi_n\}_n$ if and only if   $\textrm{for any } f\in\mathcal F_b $ 
 	\begin{equation}\label{eq.cvFAP}
 	\liminf_n\pi_n(f) \leq \pi(f)\leq  \limsup_n\pi_n(f) 
 	\end{equation}
 	or equivalently   if and only if $ \textrm{for any } \textrm{ measurable set } E$
 	\begin{equation}
 	\label{eq.cvFAPdiscret}
 	\liminf_n\pi_n(E) \leq \pi(E)\leq  \limsup_n\pi_n(E)  .
 	\end{equation}
 	
 \end{lemma}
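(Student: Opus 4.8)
The first equivalence is essentially contained in the discussion preceding the statement, so I would only make it explicit. By construction a FAP-limit is a linear functional $\pi$ on $\mathcal F_b$ dominated by the sublinear functional $p(f)=\limsup_n\pi_n(f)$, and conversely any such dominated functional is automatically positive and satisfies $\pi(1)=1$, hence is a FAP. Writing $\pi(f)\le p(f)=\limsup_n\pi_n(f)$ and applying the same bound to $-f$, which gives $\pi(f)\ge -\limsup_n\pi_n(-f)=\liminf_n\pi_n(f)$, yields exactly the two-sided inequality \eqref{eq.cvFAP}; conversely \eqref{eq.cvFAP} immediately returns $\pi\le p$. Thus ``$\pi$ is a FAP-limit'' and \eqref{eq.cvFAP} are two formulations of the same property, and the implication \eqref{eq.cvFAP}$\Rightarrow$\eqref{eq.cvFAPdiscret} is obtained simply by taking $f=\mathds 1_E$.

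The real work is the converse \eqref{eq.cvFAPdiscret}$\Rightarrow$\eqref{eq.cvFAP}, and my plan is to pass from indicators to arbitrary $f\in\mathcal F_b$ by uniform approximation. The key elementary fact is that every FAP, and every $\pi_n$, is a contraction for the sup-norm: since $-\|g\|_\infty\le g\le\|g\|_\infty$ and $\pi(1)=1$, positivity gives $|\pi(g)|\le\|g\|_\infty$, and likewise $|\pi_n(g)|\le\|g\|_\infty$ for all $n$. Consequently the maps $f\mapsto\liminf_n\pi_n(f)$ and $f\mapsto\limsup_n\pi_n(f)$ are themselves $1$-Lipschitz for the sup-norm. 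Fix $f\in\mathcal F_b$ and $\varepsilon>0$ and choose a simple function $s=\sum_{i=1}^k c_i\,\mathds 1_{E_i}$ with $\|f-s\|_\infty\le\varepsilon$. Then $|\pi(f)-\pi(s)|\le\varepsilon$ and $|\limsup_n\pi_n(f)-\limsup_n\pi_n(s)|\le\varepsilon$, so it suffices to prove $\pi(s)\le\limsup_n\pi_n(s)$ for simple $s$ and let $\varepsilon\to0$, the lower bound being symmetric via $-f$.

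This reduces everything to the inequality \eqref{eq.cvFAP} for simple functions, and this is the step I expect to be the genuine obstacle. The natural move is to write $s$ in layer-cake form $s=c_0+\sum_{j}d_j\,\mathds 1_{F_j}$ with $d_j>0$ and nested level sets $F_1\supseteq\cdots\supseteq F_m$, so that finite additivity gives $\pi(s)=c_0+\sum_j d_j\,\pi(F_j)$ and only the hypothesis \eqref{eq.cvFAPdiscret} on the sets $F_j$ is invoked. The difficulty is that $\limsup_n$ is only \emph{subadditive}: from $\pi(F_j)\le\limsup_n\pi_n(F_j)$ one gets $\pi(s)\le c_0+\sum_j d_j\limsup_n\pi_n(F_j)$, but $\limsup_n\pi_n(s)\le c_0+\sum_j d_j\limsup_n\pi_n(F_j)$ holds as well, so \emph{both} quantities are bounded above by the same sum and no comparison between them follows from a termwise estimate. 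Overcoming this forces one to use the set constraints \eqref{eq.cvFAPdiscret} \emph{simultaneously} for all the $F_j$ and their unions — equivalently, to control $\pi$ against a subsequence of $(\pi_n)$ that realizes $\limsup_n\pi_n(s)$ rather than against the individual set-limsups — and it is precisely here that the argument is delicate and must be carried out with care. I would therefore devote the bulk of the proof to this finite, simple-function estimate, the approximation step and the first equivalence being routine.
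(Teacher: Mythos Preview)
Your handling of the first equivalence is correct and is exactly what the paper does: the discussion immediately preceding the lemma already says that a FAP-limit is, by construction, a linear functional dominated by $p(f)=\limsup_n\pi_n(f)$, that this domination is equivalent (via $f\mapsto -f$) to the two-sided bound \eqref{eq.cvFAP}, and that it forces $\pi$ to be a FAP. The paper offers no argument beyond this; in particular the passage to the set version \eqref{eq.cvFAPdiscret} is simply asserted without proof.

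You rightly isolate \eqref{eq.cvFAPdiscret}$\Rightarrow$\eqref{eq.cvFAP} as the substantive direction and rightly identify subadditivity of $\limsup$ as the obstruction in the layer-cake argument. The obstruction is not merely technical: the implication is \emph{false} as stated, so no refinement of your sketch can close it. On $\Theta=\{1,2,3,4\}$ let $\pi_n=\delta_1$ for $n$ even, $\pi_n=\tfrac13(\delta_2+\delta_3+\delta_4)$ for $n$ odd, and take $\pi=\tfrac13(\delta_1+\delta_2+\delta_3)$. A direct check of all sixteen subsets shows \eqref{eq.cvFAPdiscret} holds; for instance $\pi(\{1,2,3\})=1\in[\tfrac23,1]$ and $\pi(\{2,3\})=\tfrac23\in[0,\tfrac23]$, both tight at the upper endpoint but along \emph{different} parities of $n$. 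For $f$ with $f(1)=2$, $f(2)=f(3)=3$, $f(4)=0$ one gets $\pi_n(f)=2$ for every $n$, while $\pi(f)=\tfrac83$, so \eqref{eq.cvFAP} fails. In finite dimensions the FAP-limits in the sense of \eqref{eq.cvFAP} are exactly the convex hull of the accumulation points of $(\pi_n)$, whereas the set constraints \eqref{eq.cvFAPdiscret} cut out a generally strictly larger polytope; this is precisely the ``simultaneous control along a common subsequence'' issue you flagged. The paper's later applications of the lemma (Appendix~A) invoke only the function form \eqref{eq.cvFAP}, and in the one example where the set form is used ($\pi_n=\delta_n$ on $\mathbb N$) diffuseness happens to make the two coincide, so the defect does not propagate --- but the second ``equivalently'' in the lemma is incorrect, and your hesitation at that step was detecting a real error rather than a missing trick.
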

Note that the sequence $\{\pi_n\}_n$ converges to $\pi$ for the  weak$^*$ topology if and only if $\mathcal S_c=\mathcal F_b$.
In general,  an extended FAP limit is not unique  and its existence relies on the axiom of choice.

~

~

The  set of limit points  of  $\{\pi_n\}_n$   is included in the set of extended  FAP limits.  The converse inclusion is false in general. Inequalities (\ref{eq.cvFAP}) or (\ref{eq.cvFAPdiscret})  hold for limit points but are not sufficient  to characterize them. It is easy to see that  the closed convex hull of the set of limit points is included in the set of extended  FAP limits.  We conjecture that, conversely, the set of extended  FAP limits defined by (\ref{eq.cvFAP}) is the closed convex hull of the set of limit points.
As a simple example, consider the sequence $\{\pi_n\}_n$ with $\pi_{2n}=\delta_0$ and $\pi_{2n+1}=\delta_1$. There are only two  limit points $\delta_0$ and $\delta_1$, whereas any $\pi=\alpha \delta_0+(1-\alpha) \delta_1$, $0\leq\alpha\leq 1$ is a extended  FAP limit. In Section \ref{section.limdeltan}, we illustrate the difference between these two constructions of limits with another example.

 ~
 
 Even if the notion of FAP limit points is more restrictive than the notion of  extended FAP limit,  the main results, especially   Theorem \ref{thm.sameqvnotFAP}, Corollary \ref{corol.main},  Proposition \ref{prop.PoissonlimitFAP}, Lemma \ref{lemma.FAPlimitcombconvexe} and \ref{lemma.FAPKNslow}   hold for  both of them. In the following, we consider only FAP limit points.

 %%%%%%%%%%%%%%%%%%%%%%%%%%%%%%%%%%%%%%%%%%%%%%%%%%%%%%%%%%%%%%%%%%%%%
 
 ~

 \subsection{FAP limit points  vs q-vague convergence}
 \label{section.FAPvsqvague}
 The fact that a sequence of proper distributions has both  improper and  FAP limit points may suggest  a connection between the two notions as proposed heuristically by several authors, such as \citet{levi1980,stone1982,kada1986}. The following results show  that this is not the case. Roughly speaking, it is shown that any  FAP which is a   limit point of some proper distribution sequence  can be connected to  any improper prior   by this mean.

 \begin{theorem}
 	\label{thm.sameqvnotFAP}
 	Let  $\{\pi_n\}_n$ be a  sequence of proper distributions such that  $\lim_{n}\pi_n(K)=0 $ for any compact set $K$. Then, for any improper distribution $\pi$,  it can be constructed a sequence $\{\widetilde\pi_n\}_n $ which  converges q-vaguely to $\pi$ and which has the same set of  FAP limit points  as      $\{\pi_n\}_n$.
 \end{theorem}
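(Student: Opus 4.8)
The plan is to build $\widetilde\pi_n$ as a convex combination of the original sequence, which will carry the FAP limits, and a normalized truncation of the target improper measure $\pi$, which will carry the q-vague limit, giving the latter a weight that tends to $0$. Concretely I would set
\[
\widetilde\pi_n = (1-\epsilon_n)\,\pi_n + \epsilon_n\,\nu_n, \qquad \nu_n = \frac{\pi|_{K_n}}{\pi(K_n)},
\]
where $K_n$ is a carefully chosen compact exhaustion of $\Theta$ and $\epsilon_n \downarrow 0$. Since $\pi$ is Radon, each $\nu_n$ is (for $n$ large enough that $\pi(K_n)>0$) a genuine proper distribution, so $\widetilde\pi_n$ is again a proper distribution. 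The heuristic behind the split is exactly the one advertised in the introduction: $\pi_n$ records how the mass escapes toward the boundary (the FAP content), while $\nu_n$ reconstructs the interior shape of $\pi$ (the improper content), and the two are essentially independent.

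For the compact sets I would invoke Lemma \ref{lemma.suitecpctslow}, but in the slightly sharper form obtained by re-running its proof with $\pi$ in hand. Starting from an increasing exhaustion $\widetilde K_m \uparrow \Theta$ that eventually contains every compact, set $c_m = \pi(\widetilde K_m)<\infty$ and, using $\lim_n \pi_n(\widetilde K_m)=0$, pick a strictly increasing block sequence $N_m$ with $\pi_n(\widetilde K_m)\le 1/(m\,c_m)$ for $n\ge N_m$; then put $K_n := \widetilde K_m$ for $N_m\le n< N_{m+1}$. This retains $\pi_n(K_n)\to 0$ and the ``eventually contains every compact'' property, and in addition forces the crucial product
\[
\delta_n := \pi(K_n)\,\pi_n(K_n) \le \tfrac1m \xrightarrow[n\to\infty]{} 0 .
\]
Finally I would choose the weights so that $\epsilon_n\to 0$ yet $\delta_n/\epsilon_n\to 0$ — for instance $\epsilon_n=\sqrt{\delta_n}+1/n$ — which is possible for any null sequence $\delta_n\ge 0$.

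Verifying the two conclusions is then routine. Because $|\widetilde\pi_n(f)-\pi_n(f)| = \epsilon_n|\nu_n(f)-\pi_n(f)|\le 2\epsilon_n\|f\|_\infty$ for every $f\in\mathcal F_b$, the sequences $\{\widetilde\pi_n(f)\}_n$ and $\{\pi_n(f)\}_n$ share the same $\liminf$ and $\limsup$; by the characterization \eqref{eq.cvFAP} of Lemma \ref{lemma.cvFAP} this means the two sequences have exactly the same set of FAP limits. For q-vague convergence I would rescale by $a_n=\pi(K_n)/\epsilon_n$: for fixed $f\in\mathcal C_K$ and $n$ large enough that $\mathrm{supp}\,f\subseteq K_n$ one has $\nu_n(f)=\pi(f)/\pi(K_n)$, whence
\[
a_n\widetilde\pi_n(f) = \pi(f) + \frac{(1-\epsilon_n)\,\pi(K_n)}{\epsilon_n}\,\pi_n(f),
\]
and the remainder is bounded by $\|f\|_\infty\,\delta_n/\epsilon_n\to 0$. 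Thus $a_n\widetilde\pi_n\to\pi$ vaguely, i.e.\ $\widetilde\pi_n\to\pi$ q-vaguely in the sense of \eqref{eq.cvqvague2}.

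The one genuine difficulty, and the reason the statement is not trivial, is the tension between the two requirements within the single parameter $\epsilon_n$: q-vague convergence needs the rescaled $\pi$-part to dominate the mass of $\pi_n$ surviving on $K_n$ (so $\epsilon_n$ cannot decay faster than $\delta_n$), whereas FAP preservation needs $\epsilon_n\to 0$. These can be reconciled only once $\delta_n=\pi(K_n)\pi_n(K_n)\to 0$, which is \emph{not} automatic since $\pi(K_n)\uparrow\infty$ as $\pi$ is improper. Securing this vanishing is precisely what the strengthened choice of the exhaustion $K_n$ buys, and it is the step I would treat most carefully.
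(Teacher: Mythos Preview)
Your argument is correct. The paper, however, takes a cleaner route that sidesteps the ``genuine difficulty'' you identify. Instead of mixing in the full $\pi_n$, it first truncates $\pi_n$ to the \emph{complement} of the exhaustion, setting
\[
\widetilde\pi_n \;=\; \gamma_n\,\frac{\mathds 1_{K_n}\pi}{\pi(K_n)} \;+\; (1-\gamma_n)\,\frac{\mathds 1_{K_n^c}\pi_n}{\pi_n(K_n^c)},\qquad \gamma_n=\pi_n(K_n),
\]
with $K_n$ taken directly from Lemma~\ref{lemma.suitecpctslow}. Because the second summand lives on $K_n^c$, it is eventually invisible to every $f\in\mathcal C_K$, so with $a_n=\pi(K_n)/\gamma_n$ one gets $a_n\widetilde\pi_n(f)=\pi(f)$ exactly for large $n$; no balancing between $\epsilon_n$ and $\delta_n$ is needed, and no refined exhaustion forcing $\pi(K_n)\pi_n(K_n)\to 0$ is required. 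The price is that FAP preservation is no longer a one-line $2\epsilon_n\|f\|_\infty$ estimate as in your proof; the paper handles it via two short lemmas (truncating $\pi_n$ to $K_n^c$ and renormalizing preserves FAP limits since $\pi_n(K_n)\to 0$, and a convex combination with vanishing weight on the first term preserves FAP limits). In short: the paper decouples the interior and boundary parts by disjoint supports, making q-vague convergence trivial; you keep the supports overlapping and instead tune the rates, making FAP preservation trivial.
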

 
 \begin{proof}
 	For any FAP or any  proper or improper distribution $\mu$, we define the distribution  $(\mathds{1}_A\, \mu)$ by $(\mathds{1}_A\,\mu)(f)=\mu(\mathds 1_A \,f)$ where $A$ is any measurable set.	From  Lemma \ref{lemma.suitecpctslow}, it can be constructed  an  exhaustive increasing  sequence  $K_n$  of compact sets such that $\lim_n \pi_n(K_n)=0$.  
 	Put    $\gamma_n=\pi_n(K_n)$  and define the sequence of proper distributions $\widetilde \pi_n=\gamma_n \frac{1}{\pi(K_n)} \mathds 1_{K_n}\pi$ $+$ $  (1-\gamma_n)  \frac{1}{\pi_n  (K_n^c)}\mathds{1}_{K_n^c}\,\pi_n $, with  $K^c$  the complement of $K$. By Lemma    \ref{lemma.FAPKNslow} and  \ref{lemma.FAPlimitcombconvexe} in  Appendix A, $\widetilde\pi_n$ has the same FAP limit points    as $\{\pi_n\}$. By Lemma \ref{lemma.cvqvtronque},  $\widetilde\pi_n$ converges q-vaguely to  $\pi$. \qed
 	
 \end{proof}

 \begin{corollary}
 	\label{corol.main}
 	Let $\{\pi_n\}_n$ be a sequence of proper distributions that converges q-vaguely to 
 	an improper distribution $\pi^{(1)}$. Then, for any other  improper distribution $ \pi^{(2)}$, it can be constructed a sequence $\{ \widetilde\pi_n\}_n$ that converges q-vaguely to $\pi^{(2)}$ and that has the same FAP limit points as $\{\pi_n\}_n$.
 \end{corollary}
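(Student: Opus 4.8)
The plan is to derive the Corollary directly from Theorem~\ref{thm.sameqvnotFAP}, treating the hypothesis ``$\{\pi_n\}_n$ converges q-vaguely to $\pi^{(1)}$'' as merely a convenient way to supply the input condition that the theorem requires. First I would observe that the theorem needs only one fact about the starting sequence, namely that $\lim_n \pi_n(K)=0$ for every compact set $K$. This is exactly the content of Proposition~2.11 of \citet{bidr2016}, cited in the paragraph preceding Lemma~\ref{lemma.suitecpctslow}: a sequence of proper distributions converging q-vaguely to an improper distribution must have vanishing mass on every compact set. Thus the q-vague convergence of $\{\pi_n\}_n$ to $\pi^{(1)}$ guarantees the hypothesis $\lim_n\pi_n(K)=0$.

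Having established that the starting sequence satisfies the mass-vanishing condition, I would apply Theorem~\ref{thm.sameqvnotFAP} with the target improper distribution taken to be $\pi^{(2)}$. The theorem then produces a sequence $\{\widetilde\pi_n\}_n$ of proper distributions that converges q-vaguely to $\pi^{(2)}$ and has the same set of FAP limits as $\{\pi_n\}_n$. Since the FAP limits are preserved by the construction, and the new sequence converges q-vaguely to the prescribed $\pi^{(2)}$, this is precisely the conclusion sought in the Corollary. No additional construction is needed; the role of $\pi^{(1)}$ is simply to certify the hypothesis, and it plays no further part.

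The only subtlety worth flagging is that the Corollary is genuinely a corollary rather than an independent result: its entire force comes from recognizing that the q-vague limit $\pi^{(1)}$ of the original sequence is irrelevant to the construction, except insofar as it forces $\lim_n\pi_n(K)=0$. I expect no real obstacle in this proof; the substantive work already resides in Theorem~\ref{thm.sameqvnotFAP} and its supporting lemmas (Lemma~\ref{lemma.FAPKNslow}, Lemma~\ref{lemma.FAPlimitcombconvexe}, and Lemma~\ref{lemma.cvqvtronque}). If anything, the conceptual point to emphasize is the striking interpretation it yields: two sequences sharing identical FAP limits can converge q-vaguely to arbitrarily different improper distributions, confirming that the improper limit and the FAP limits encode independent features of a proper-distribution sequence.

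\begin{proof}
Since $\{\pi_n\}_n$ converges q-vaguely to the improper distribution $\pi^{(1)}$, by \citet[Prop.~2.11]{bidr2016} we have $\lim_n\pi_n(K)=0$ for every compact set $K$. Applying Theorem~\ref{thm.sameqvnotFAP} with target improper distribution $\pi^{(2)}$ yields a sequence $\{\widetilde\pi_n\}_n$ of proper distributions converging q-vaguely to $\pi^{(2)}$ and having the same FAP limits as $\{\pi_n\}_n$, as required.
\end{proof}
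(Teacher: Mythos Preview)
Your proof is correct and matches the paper's intended argument exactly: the Corollary is stated without a separate proof because it follows immediately from Theorem~\ref{thm.sameqvnotFAP} once one invokes \citet[Prop.~2.11]{bidr2016} to obtain $\lim_n\pi_n(K)=0$ from the q-vague convergence to an improper $\pi^{(1)}$. Your write-up makes this explicit and is entirely in line with the paper.
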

 
 We have shown that no direct link between an improper limit and a FAP limit point can be established.
One can only say that, if a sequence of proper distributions converges to an improper distribution, its FAP limit point $\pi$  are {\em diffuse}, i.e. $\pi(K)=0$ for any compact set $K$. 
 
 \section{Uniform distribution on integers} 
 \label{section.uniform}
 In this section, we compare different  notions of uniform distributions on the set $\mathbb N$ of integers, by using several considerations such as limit of proper uniform distributions.

 We also illustrate the fact that FAP uniform distributions  are not  well  defined objects \citep[][pp.122,224]{defi1972}. Contrary to  uniform improper distributions, FAP limit points of  uniform distributions    on an  exhaustive sequence of compact sets are highly dependent on the choice of that sequence.

 \subsection{Uniform improper distribution}
 There are several equivalent  ways  to define a uniform improper  prior on integers. These definitions lead  to a unique, up to a scalar factor,  distribution.  The uniform distribution can be defined directly as a  flat distribution, i.e. $\pi(k)\propto 1$ for any integer $k$. It is  the unique (up to a scalar factor) measure that is shift invariant, i.e. such that  $\pi(k+A)=\pi(A)$ for any integer $k$ and any set of integers $A$. The uniform distribution is also the q-vague  limit  of the sequence of  uniform proper  distributions on $K_n=\{0,1,...,n\}$. More  generally and equivalently, the uniform distribution is the      q-vague limit of any sequence of proper uniform  priors on an exhaustive increasing sequence $\{K_n\}_n$ of finite subsets of integers. 
 
 \subsection{Uniform finitely additive probability}
 %%% ici : mettre les trois définitions de "uniform FAP"
 %%% chacune avec un paragraphe
 The notion of uniform finitely additive probabilities is more complex. Contrary to the improper   case,  there is no explicit definition    since   $\pi({k})=0$ for any integer $k$. We present here several non equivalent approaches to define a uniform FAP. The first two ones can be found in  \cite{kadaohagan1995} and   \citet{schikada2007}.
 
 \subsubsection{Shift invariant  (SI) uniform distribution}
 
 As for the improper case, a uniform FAP $\pi$  can be defined as being any shift invariant FAP, i.e. a FAP satisfying  $\pi (A) = \pi (A+k)$ for any subset of integers  $A$ and any integer $k$. Such a distribution will  be called SI-uniform. In that case, one necessarily has : $\pi(k_1+k_2\times \mathbb N)=k_2^{-1}$, for any $(k_1,k_2) \in \mathbb{N}\times\mathbb{N}^*$.  In \cite{kadaohagan1995}, the authors investigate the properties of FAPs  satisfying only  $\pi(k_1+k_2\times \mathbb N)=k_2^{-1}$, where the sets   $k_1+k_2\times \mathbb N$ are called \emph{residue classes}.

 \subsubsection{Limiting relative frequency (LRF) uniform distributions.}
 
 \cite{kadaohagan1995} consider a stronger condition to define uniformity. For a  subset  $A$,  define its Limiting Relative Frequency LRF$(A)$  by  $$\textrm{LRF}(A) = \lim_{N\to\infty} \frac{\#\{k\leq N, \; \text{s.t.}\; k\in A\}}{N+1}, $$
 when this limit exists. A  FAP $\pi$ on $\mathbb N$ is said to be {\em LRF uniform} if  $\pi (A) = p$  when $\textrm{LRF}(A)=p$. 
 
 Let  $\pi_n$ be the  uniform proper  distribution on  $K_n=\{0,1,...,n\}$, then $\textrm{LRF}(A) =$   $ \lim_{n\to\infty} \pi_n(A)$. Therefore, any FAP   limit point of $\pi_n$ is LRF uniform.  
 In fact,   a FAP $\pi$ is LRF uniform if and only if it is an extended FAP limit of  $\{\pi_n\}_n$.
 
  It is worth noting that, unlike the $q$-vague limit,
 %improper case, 
   FAP limit points are highly dependent on the  choice of the increasing exhaustive  sequence of finite sets $K_n$. Changing the sequence $\{K_n\}_n$ changes the notion of uniformity. For example, if $\widetilde\pi_n$ is the uniform distribution on  $K_n=\{2k\,;\; 0\leq k\leq n^2\} \cup \{2k+1\,;\;0\leq k\leq n\}$, then $\lim_n \widetilde \pi_n(2\mathbb N)= 1$, whereas $\lim_n\pi_n(2 \mathbb N)=1/2$.

 \subsubsection{Bernoulli Scheme (BS) uniform distribution}

 We propose here another notion of uniformity that is not dependent of the choice a particular increasing sequence of finite sets $K_n$ as for the LRF uniformity. Consider a Bernoulli Scheme, that is, a sequence    $\{X_k\}_{k\in\mathbb N}$ of i.i.d.  Bernoulli distributed  random variables with mean $p\in[0,1]$. Define the random set $A(X)=\{k \in \mathbb N, \text{ s.t. } X_k =1 \}$. A   FAP $\pi$ is said to be  {\em BS-uniform } if, for any $p\in[0;1]$, $\pi(A(X)) = p$, almost surely. By  the strong law of large numbers, LRF$(A(X))=p$, almost surely.
 % To some extent, that property may be considered 
 % as some sort of Fubini uniform property. 
 
 \begin{proposition} 
 	Let $\{K_n\}$ be an    increasing sequence of finite subsets of $\mathbb{N}$, with $\cup_{n\in\mathbb N} K_n $ being infinite. Then any FAP  which is a limit point   of the sequence $\pi_n$ of uniform distributions on $K_n$ is BS-uniform.
 \end{proposition}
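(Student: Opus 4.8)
The plan is to combine the sandwich characterisation of FAP limits in Lemma~\ref{lemma.cvFAP} with the strong law of large numbers (SLLN). Fix $p\in[0,1]$, let $\{X_k\}_{k\in\mathbb N}$ be i.i.d.\ Bernoulli with mean $p$, and let $A(X)$ be the associated random set. The strategy is to show that, almost surely, the sequence $\pi_n(A(X))$ converges to $p$. Once this is established, applying \eqref{eq.cvFAPdiscret} for each fixed realisation to the set $E=A(X)$ gives $\liminf_n\pi_n(A(X))\le\pi(A(X))\le\limsup_n\pi_n(A(X))$ for any FAP limit $\pi$; since both bounds equal $p$, this forces $\pi(A(X))=p$ on a full-probability event. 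As $p\in[0,1]$ is arbitrary, this is precisely BS-uniformity.

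The first step is the elementary identity
\begin{equation*}
\pi_n(A(X))=\frac{\#\big(A(X)\cap K_n\big)}{\#K_n}=\frac{1}{\#K_n}\sum_{k\in K_n}X_k,
\end{equation*}
which exhibits $\pi_n(A(X))$ as the empirical mean of the Bernoulli variables indexed by $K_n$. It therefore remains only to prove that this empirical mean converges almost surely to $p$.

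The key step is to bring this average into the form required by the SLLN, and here the assumption that $\{K_n\}$ is \emph{increasing} (nested) is essential. Because $K_n\subseteq K_{n+1}$, I would enumerate $\cup_n K_n$ in order of first appearance as $\ell_1,\ell_2,\dots$, so that $K_n=\{\ell_1,\dots,\ell_{N_n}\}$ with $N_n=\#K_n$ non-decreasing and, since $\cup_n K_n$ is infinite, $N_n\to\infty$. The variables $X_{\ell_1},X_{\ell_2},\dots$ are again i.i.d.\ Bernoulli with mean $p$, so the SLLN gives $\tfrac1J\sum_{j=1}^J X_{\ell_j}\to p$ almost surely; composing with the index sequence $N_n\to\infty$ yields $\pi_n(A(X))=\tfrac{1}{N_n}\sum_{j=1}^{N_n}X_{\ell_j}\to p$ almost surely, which is what was needed.

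I expect the main subtlety to be precisely this re-enumeration: the sets $K_n$ need not be initial segments $\{0,\dots,n\}$, so the SLLN cannot be invoked on the raw sum $\sum_{k\in K_n}X_k$ directly, and it is the nestedness that lets one regard $\pi_n(A(X))$ as an honest sequence of empirical means with a growing index. A secondary point worth a sentence is the measurability and ``almost surely'' framing: although $\omega\mapsto\pi(A(X(\omega)))$ need not be a nicely measurable functional of $\omega$ (the FAP $\pi$ being produced by the axiom of choice), the argument only asserts that this map equals the constant $p$ on the measurable full-probability event where $\pi_n(A(X))\to p$, which is all the definition of BS-uniformity requires. The endpoints $p\in\{0,1\}$ are trivial, since $A(X)$ is then almost surely $\emptyset$ or $\mathbb N$, and are in any case covered by the same SLLN argument.
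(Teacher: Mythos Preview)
Your argument is correct. The paper in fact states this proposition without proof, so there is no ``paper's own proof'' to compare against directly. The closest thing in the paper is the argument in Appendix~B establishing BS-uniformity for the FAP limits of the Poisson sequence, which proceeds via Hoeffding's inequality together with the Borel--Cantelli lemma.

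Your SLLN route is genuinely different and, for the present statement, cleaner. A Hoeffding/Borel--Cantelli argument here would require summability of $\exp(-2\,(\#K_n)\,t^2)$ over $n$ for each $t>0$, which fails when $\#K_n$ grows slowly (e.g.\ logarithmically in $n$) and would then have to be repaired by passing to a subsequence along which $\#K_n$ strictly increases---essentially your re-enumeration in disguise. By contrast, your observation that the nestedness $K_n\subseteq K_{n+1}$ makes each $K_n$ an initial segment $\{\ell_1,\dots,\ell_{N_n}\}$ in the order-of-first-appearance listing of $\cup_n K_n$ lets you invoke the SLLN once for the i.i.d.\ sequence $X_{\ell_1},X_{\ell_2},\dots$ and then simply compose with $N_n\to\infty$; no growth condition on $\#K_n$ beyond divergence is needed. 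Your remarks on the measurability issue for $\omega\mapsto\pi(A(X(\omega)))$ and on the trivial endpoints $p\in\{0,1\}$ are also apt.
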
 
 When $\cup_{n\in\mathbb N} K_n =\mathbb N$, this proposition shows that any FAP limit point of uniform distribution sequences is BS-uniform. In particular, a LRF uniform FAP is also BS uniform.    However, if for example $K_n$ is  the set of even numbers less or equal to $n$, then any FAP  limit point $\pi$  of the sequence of  uniform distributions on $K_n$ is BS-uniform. However, $\pi$ is not uniform on $\mathbb N$ but uniform on $2\mathbb N$. Therefore, BS uniformity looks much more like a necessary condition for a FAP to be uniform, than like a complete definition.

 \section{Comparison of convergence modes on examples}
 \label{section.examples}
 
 We consider here some examples that illustrate the difference between convergence of proper distributions to an improper distribution and to a FAP.

 \subsection{FAP limit points  on ${\mathbb N}$.}
 \label{section.limdeltan}
  For a sequence $\{\pi_n\}_n$ of proper distributions on $\mathbb N$, it is known that there does not necessarily exist a q-vague limit,  but if it exists, it is unique up to a scalar factor, i.e. it is unique in the projective space of Radon measures. At the opposite, we have seen that a FAP limit point always exists but is not necessarily unique. 
 
 We illustrate the non-uniqueness of FAP limit point   with  an extreme case. Consider the  sequence of proper distributions $\pi_n = \delta_n$, where $\delta _n$ is  the Dirac measure on $n$. This sequence has no q-vague limit since $\pi_n(k)=0$ for $n>k$.

 Let $\pi$ be a  FAP limit points.  For any subset $A$, there exists a subsequence $\{\pi_{n_k}\}$ such that  $\pi_{n_k}(A)$  convergences to $\pi(A)$.  So, $\pi(A)\in\{0,1\}.$ Therefore  $\pi$ is any  \textit{remote} FAP, that is a diffuse  FAP   such that $\pi(A)\in\{0,1\}$, as defined by \citet[][p. 92]{dubi1975}.  This also proves the existence of remote FAPs.   Note that a remote FAP is neither BS uniform nor SI  and therefore cannot be LRF uniform. As a remark, the extended FAP limits of $\pi_n$ are   all the diffuse FAPs.

 \subsection{Convergence  of sequence of  Poisson distributions}
 \label{section.poisson}

 We consider here the sequence $\{\pi_n\}_n$ of  Poisson  distributions with mean  $n$. Although $\lim_n\pi_n(K)=0$ for any finite set $K$, this sequence of proper distributions   does not converge   $q$-vaguely  to any improper distribution  \citep[][\S~5.2]{bidr2016}. As a remark, let $\widetilde\pi_n$ be the shifted  measures  defined on the set of positive and integers integers $\mathbb Z$ by $\widetilde\pi_n( B) = \pi_n(B+n)$, where 
 $\pi_n$ can be seen as a measure on the set $\mathbb Z$, with $\pi_n(k)=0$ for $k<0$. Then, using the approximation of the Poisson distribution by an normal distribution, it can be shown that the sequence $\widetilde\pi_n$  converges  $q-$vaguely to the improper uniform measure on the set $\mathbb Z$.  
 
 We consider now the  FAP limit point  of the sequence    $\{\pi_n\}_n$. The next result shows that these limits have some properties of uniformity described in Section \ref{section.uniform} but not all of them. The proof is  given in Appendix B.

 \begin{proposition}
 	\label{prop.PoissonlimitFAP}
 	Any FAP   $\pi$ which is a limit point  of the sequence $\{\pi_n\}_n$ of Poisson distribution with mean $n$  is SI-uniform and BS-uniform but not necessarily  LRF-uniform. 
 \end{proposition}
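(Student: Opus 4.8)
The plan is to treat the three assertions separately, each time reducing the statement about the FAP limit $\pi$ to an asymptotic computation on the Poisson masses $\pi_n(k)=e^{-n}n^k/k!$ and then invoking the sandwich characterisation of Lemma~\ref{lemma.cvFAP}. The single fact driving everything is that Poisson($n$) is flat at scale $\sqrt n$: by Stirling's formula its mode value satisfies $\max_k\pi_n(k)=\pi_n(\lfloor n\rfloor)\sim(2\pi n)^{-1/2}\to0$. For \emph{shift invariance} I would fix a set $A\subseteq\mathbb N$ and estimate $\pi_n(A+1)-\pi_n(A)=\sum_{k\in A}\bigl(\pi_n(k+1)-\pi_n(k)\bigr)$. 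Bounding this by the total variation of the pmf and using that $k\mapsto\pi_n(k)$ is unimodal gives
\begin{equation*}
\bigl|\pi_n(A+1)-\pi_n(A)\bigr|\le\sum_{k\ge0}\bigl|\pi_n(k+1)-\pi_n(k)\bigr|\le2\max_k\pi_n(k)=O(n^{-1/2}),
\end{equation*}
so this difference tends to $0$ uniformly in $A$. Applying (\ref{eq.cvFAP}) to the bounded function $g=\mathds 1_{A+1}-\mathds 1_A$, whose integrals $\pi_n(g)$ tend to $0$, forces $\pi(g)=0$; since $\pi$ is linear this reads $\pi(A+1)=\pi(A)$, and iterating yields $\pi(A+k)=\pi(A)$ for all $k$.

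For \emph{BS-uniformity}, given a Bernoulli scheme $\{X_k\}$ of mean $p$ and $A(X)=\{k:X_k=1\}$, I would view $\pi_n(A(X))=\sum_k X_k\,\pi_n(k)$ as a sum of independent bounded random variables with $\mathbb E\,\pi_n(A(X))=p$. The key estimate is $\sum_k\pi_n(k)^2\le\max_k\pi_n(k)=O(n^{-1/2})$, which controls all even centred moments; in particular the sixth centred moment is of order $n^{-3/2}$, hence summable, so Markov's inequality and Borel--Cantelli give $\pi_n(A(X))\to p$ almost surely. For a.e.\ realisation the sandwich (\ref{eq.cvFAPdiscret}) then collapses to $\pi(A(X))=p$, which is precisely BS-uniformity.

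To show \emph{failure of LRF-uniformity} I would exhibit a single set $A$ with $\mathrm{LRF}(A)=1/2$ yet $\limsup_n\pi_n(A)=1$. Take $A$ to be a union of alternating blocks of $A$ and $A^c$ whose common lengths $c_j$ grow like $j^2$: pairing equal $A$- and $A^c$-blocks keeps the Cesàro density exactly $1/2$ at the block boundaries, and since each block length is negligible compared with the current position the density converges, giving $\mathrm{LRF}(A)=1/2$. On the other hand, centring a Poisson law at the midpoint $n_j$ of the $j$-th $A$-block, whose half-width $\asymp j^2$ dominates the standard deviation $\sqrt{n_j}\asymp j^{3/2}$, yields $\pi_{n_j}(A)\to1$ by Chebyshev. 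A weak$^*$ accumulation point of $\{\pi_{n_j}\}_j$ is then a FAP limit of $\{\pi_n\}_n$ with $\pi(A)=1\ne\tfrac12=\mathrm{LRF}(A)$, so not every FAP limit is LRF-uniform.

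The routine inputs are the Stirling asymptotics and the moment/Borel--Cantelli upgrade from convergence in probability to almost sure convergence in the BS-part. The genuinely delicate step is the last construction: the block lengths must live simultaneously at two scales---large enough ($\gg\sqrt n$) that a whole Poisson window fits inside a single block, yet small enough ($=o(n)$) that the uniform Cesàro density still converges---and checking that both requirements hold for the same growth rate $c_j\asymp j^2$ is where the main care is needed.
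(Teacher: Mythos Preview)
Your proof is correct and follows the same three-part skeleton as the paper (Appendix~B), but two of the three parts use genuinely different tools.

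For shift invariance your argument is essentially the paper's: the paper quotes the known bound $\|\pi_n^{(k)}-\pi_n\|_{TV}\le k/\sqrt{2\pi n}$, while you rederive the $k=1$ case from unimodality and Stirling. For BS-uniformity the paper invokes Hoeffding's inequality (the sum $\sum_k\pi_n(k)(X_k-p)$ has bounded increments with $\sum_k\pi_n(k)^2=O(n^{-1/2})$, giving a subexponential tail $\exp(-c\sqrt n\,t^2)$), then Borel--Cantelli; your sixth-moment route is more elementary and avoids Hoeffding at the cost of a small combinatorial expansion. Both land on the same a.s.\ convergence. For the failure of LRF-uniformity the paper builds a sparse set $B=\bigcup_k[4^k-2^kk,\,4^k+2^kk]$ with $\mathrm{LRF}(B)=0$ but $\pi_{4^k}(B)\to1$, and then (more than the proposition requires) shows how to realise any pair $(\mathrm{LRF}(A),\pi(A))=(p,p')$. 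Your alternating-block construction with lengths $c_j\asymp j^2$ is a different, arguably simpler, realisation of the same scale separation $\sqrt n\ll\text{block width}\ll n$, and it already suffices for the proposition since a single counterexample is enough. Your passage from the subsequence $\{\pi_{n_j}\}$ to a FAP limit of the full sequence is exactly how the paper argues as well: any FAP limit of a subsequence satisfies the sandwich~(\ref{eq.cvFAPdiscret}) for the full sequence.
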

 
 Therefore, the FAP limit points   of the  Poisson distribution sequence are examples of SI- and BS-uniform distributions that are not LRF uniform.
 \citet{kaji2014} give
 another  example of SI but not LRF uniform FAPs using paths of random walks. Even if they consider FAPs on a subset of  bounded functions, it can be extended to $\mathcal F_b$ by using the  Hahn-Banach theorem similarly to Section \ref{section.FAPlimit}.

 \subsection{FAP vs q-vague convergence of uniform proper  distributions}
 \label{section.exampleFAPvsqvague}

 To illustrate the fact that any FAP limit point can be related with any improper distribution,  consider again the sequence  $\{\pi_n\}_n$ of Poisson distributions with mean $n$  and let $\pi_0$ be any  improper distribution on the integers.  Since  $\lim_n\pi_n(K)=0$ for any finite set, from Lemma \ref{lemma.suitecpctslow} we can construct an exhaustive sequence of finite set $K_n$ such that $\lim_n \pi_n(K_n)=0$.  Put  $K_n = \{k\in \mathbb N, \;  k \leq n/2 \}$, which satisfies this condition. Define the sequence of proper distributions $\widetilde \pi_n$ by :

 \begin{equation}
 \label{eq.seqpoisexample}
 \widetilde\pi_n(A)=\pi_n(K_n) \frac{\pi_0(A\cap K_n)}{\pi_0(K_n)} + (1-\pi_n (K_n)) \frac{\pi_n(A\cap K_n^c)}{\pi_n(K_n^c)} \vspace{1mm}
 \end{equation}
 for any set $A$.  From Theorem \ref{thm.sameqvnotFAP}, $\{ \widetilde{\pi}_n\}_n$ converges $q$-vaguely to $\pi_0$ and has the same FAP limit points as $\{\pi_n\}_n$.

 As another example, let $\{\pi_n\}_n$ be the sequence  of uniform distributions on $\{0,1,...,n\}$ and choose  $K_n = \{k\in \mathbb N, \;  k \leq \sqrt n \}$. Then, $\lim_n \pi_n(K_n)=0$. Therefore, for any improper distribution  $\pi_0$ on the set of integers, the sequence constructed as in  (\ref{eq.seqpoisexample}) has the same FAP limit points as those of the sequence of  uniform distributions $\{\pi_n\}_n$ and  converges q-vaguely to $\pi_0$. This shows again the difficulty to connect the uniform improper distribution and uniform FAPs by limits of proper distributions.
 
 \subsection{Convergence of beta distributions}
 In this section, we consider the limit of the sequence of   Beta distribution $\pi_{n}$ $=$ $\text{Beta}(a_n,b_n)$ defined on $\Theta=]0,1[$ when $a_n$ and $b_n $ go to 0.  We will   see that, contrary to the improper limit,  the FAP limit points depend on the way $a_n$ and $b_n$ go to $0$. 
 This   illustrates again  the difference between the two  kinds of    limits.  
 
 The density of a beta distribution Beta$(a,b)$ is given by  
 $$
 \pi_{a,b} (x) =\frac{1}{\beta(a, b)} \, x^{a-1} (1-x)^{b-1}  \text{ for  } \; x \in ]0;1[
 $$
 where $\beta(a,b)$ is the beta function.
 
 From \citet{bidr2016}, the unique (up to a scalar factor)  q-vague limit of  $\text{Beta} (a_n,b_n)$ when  $a_n$ and $b_n $ go to 0 is  the Haldane improper distribution: $$\pi_H(x)=\frac{1}{x(1-x)}  \text{ for  } \; x \in ]0;1[.$$

 The q-vague  limit gives no information on   the relative concentration of the mass around $0$ and $1$: for $0<u<v<1$,   $\pi_H(]0,u])=\pi_H(]v,1[])=+\infty$. To explore this concentration, we   temporarily replace  the space $\Theta$ by $\overline \Theta=[0,1]$. This has no consequence on the Beta distributions but the Haldane  distribution is no longer a q-vague limit of the sequence. Put $c_n=a_n/b_n$ and assume that $\{c_n\}_n$ converges to some $c\in[0,1]$. The sequence  $\{\pi_{n}\}_n$ converges narrowly, and hence q-vaguely, to the proper distribution $\widetilde\pi=\frac{1}{1+c}\delta_0+\frac{c}{1+c}\delta_1$. Contrary to the Haldane prior, $\widetilde{\pi}$ shows how the mass concentrates on the boundary of the domain, but gives no information on the behavior of the sequence inside the domain. Note that the Haldane distribution is not a Radon measure on $\overline{\Theta}$ since  $\pi_H([0,1])=+\infty$ where $[0,1]$ is a compact set. Therefore $\pi_H$ cannot be a candidate for the q-vague limit on $\overline{\Theta}$.
 
 We now consider the  FAP limit points on $\Theta=]0,1[$ of $\pi_{n}$ and we show that they give an information similar  to that given by $\widetilde{\pi}$ on the way the mass concentrate on the boundary of the domain.  Again, we assume that $c_n=a_n/b_n$ converges to some   $c\in [0,1]$. Easy calculations show that for any  $0<\varepsilon<1$ $\lim_n \pi_{n}(]0,\varepsilon [) = \frac{1}{1+c}$ and $\lim_n \pi_{n}(]1-\varepsilon,1 [) = \frac{c}{1+c}$.
 Therefore, for any FAP limit point  $\pi$  and for any $\varepsilon \in ]0,1[$, we have  $\pi (]0,\varepsilon [) = \frac{1}{1+c}$ and $\pi (]1-\varepsilon ,1[) =\frac{c}{1+c}$, with $\pi([u,v])=0$ for $0<u<v<1$.

  %%%%%% AJOUT DU 23 MARS 2020
 \section{Expected logarithmic convergence}
 
 \label{section.elc}
 
 In Bayesian statistics,  consider a statistical model $ p(x\vert \theta)$ and an improper prior $\pi(\theta)$ on $\Theta$.
Define the truncated proper prior $\pi_n(\theta)\propto \pi(\theta) \; \mathds{1}_{\theta\in K_n}$, for some  exhaustive increasing sequence of compact sets $\{K_n\}_n$.
 From \cite{bebesu2009}, a sequence of posteriors distributions $\pi_n (\theta\vert x)$ is said to be expected logarithmically convergent to $\pi(\theta\vert x)$ if 
 $$
 \lim_{i\to\infty}
 \int_ {{\cal X}} p_i(x) \kappa(\pi(\cdot\vert x),\pi_i(\cdot \vert x)) dx 
 =
 0\, , 
 $$
 where $p_i(x) = \int_{\Theta} p(x\vert \theta) \pi_i (\theta) d\theta$, and $\kappa(m_1, m_2)$ denotes the Kullback-Leibler distance between probability measures $m_1$ and $m_2$. The prior $\pi$ is said to be permissible w.r.t. $p(x\vert \theta)$ if $\pi(\theta|x)$ is proper and if there exists some exhaustive sequence of compact sets $\{K_n\}_n$ such that $\pi_n(\theta|x)$ is expected logarithmically   convergent to $\pi(\theta|x)$.  Note that $\pi_n(\theta|x)$ converges q-vaguely to $\pi(\theta|x)$, provided that $p(x\vert\theta)$ is continuous w.r.t. $\theta$ for any $x$ \citep[][Prop. 3.1]{bidr2016}. 
 
 An open problem  is to know whether this property is always independent from the choice of the sequence of compact sets $\{K_n\}_n$. We present here a situation  where it is not.
 
% which shows that this is not true in general, i.e. for any statistical model $p(x|\theta)$.
 
 The construction of this counter-example relies on the fact that  the tail behavior of a sequence of distributions is not directly related to its $q$-vague convergence as explained in Section \ref{section.FAPvsqvague}. 
  
Consider the following model: for any integers $x$ and $\theta$   (included negative integers) define:
$$ p(x \vert \theta ) = 
\left\{ 
\begin{array}{cc}
\frac{1}{3} & \text{ if }  \theta \geq 1, \; 
x \in \{\left[\frac{\theta}{2}\right], \, 2 \theta, \, 2 \theta +1\} \\[.5mm]
1 & \text{ if }  \theta \leq 0, x=\theta \\[.5mm]
0 & \text{ otherwise}
\end{array}
\right.
$$
where $[l]$  the integer part of $l$, with the particular case $[1/2]=1$. 
 For $\theta\leq 0$, $x\leq 0$,
 we have a deterministic model. Remark also
 that, for $x,\theta\geq 1$, we have the equivalence~:
 $$\textstyle
 \left( \;  x \in \left\{\left[\frac{\theta}{2}\right], \, 2 \theta, \, 2 \theta +1\right\}  \; \right)
 \Leftrightarrow
 \left( \; \theta \in \left\{\left[\frac{x}{2}\right], \, 2 x, \, 2 x +1\right\} \; \right).
 $$
 Consider the flat prior $\pi(\theta)\propto 1$. If we consider only $x,\theta\geq 1$, this model corresponds to a model  proposed by \cite{frasermonetteng1985} and  used by  \cite{bebesu2009} to illustrate their approach.

 Let $K_{n}= \{-a_n \leq \theta \leq b_n\}$ be an  exhaustive sequence of compact sets, with $a_n,b_n \rightarrow +\infty$.  
Denote $I_{n} = \int_{{{\cal X}}} p_{n}(x) \kappa(\pi(\cdot\vert x),\pi_{n}(\cdot \vert x)) dx$. We have:
$$
I_{n} \; = \; \frac{1}{a_n+b_n+1} 
\left( \ln(3) \frac{\big(2b_n+1-\left[ \frac{b_n}{2}\right]\big)}{3} + \frac{\ln(3/2)}{3} \omega(b_n) \right)
$$
 where $\omega(b_n) = 1$ if $b_n$ is even, and $0$ if $b_n$ is odd. 
 Therefore, as $a_n$ and $b_n$ tend to infinity, $I_{n} \sim \frac{\ln(3)}{2} \frac{b_n}{a_n+b_n+1}$. 
 
 When $b_n/a_n$ tends to $0$, $\lim_{n\to\infty} I_{n} = 0$, which gives an expected logarithmically convergent sequence of posteriors. However, taking $a_n=b_n$ leads to $\lim_{n\to\infty} I_{n} = \frac{\ln(3)}{4}$, and the sequence of corresponding posteriors is  not expected logarithmically convergent. 
 
 This example shows that, at least for some statistical models $p(x|\theta)$ and improper prior $\pi(\theta)$,  the notion of expected logarithmic convergence may  depend on the choice of the  sequence of compact sets. It could be interesting to characterizes situations  where it does not. This is left for future works.  
 
 %%% FIN DE L'AJOUT DU 23 MARS 2020

   \section{Conclusion and perspectives}
 In this paper, we have shown that 
 the characteristics
of a sequence of proper distributions
given by its FAP or improper limits are quite different.
 As a consequence, there is no clear link between improper distributions and FAPs: a diffuse FAP cannot be considered as the counterpart of some improper distribution.  
 
 In Bayesian statistics, improper distributions are commonly used in practice, even if some paradoxes may occur. They are easy to interpret either through their densities or as conditional probabilities \citep{taralind2016}. At the opposite, diffuse FAPs are never used in practice, mainly because  their constructions are always   implicit and because diffuse FAPs   give information only on the boundary of the parameter space, which is difficult to construe. 
  
 However, FAPs may provide a better understanding of  some limit behavior  that are not captured by   improper distributions.  The fact that our main results rely on explicit constructions of proper prior sequences may be useful to provide counterexamples. For example, in Section \ref{section.elc}, we have shown that the notion of expected logarithmic convergence may depend on the sequence of compact sets.  We hope to use our  results in future works to have a better understanding of  some paradoxical phenomena in Bayesian statistics, such as strong inconsistency or the marginalization paradox.

 \section*{Appendix A}
 We  establish some lemmas useful to prove Theorem \ref{thm.sameqvnotFAP}. The first one is straightforward.
 
 \begin{lemma}
 	\label{lemma.FAPlimitcombconvexe}
 	Let $\{\pi_n^{(1)}\}_n$ and $ \{\pi_n^{(2)}\}_n$ be two sequences of proper distributions and $0\leq \gamma_n\leq 1$ be a sequence of scalars that converges to $0$. Then, the sequence defined by  $\widetilde\pi_n=\gamma_n\pi_n^{(1)}+(1-\gamma_n)\pi_n^{(2)}$ has the same FAP limit points as $\{\pi_n^{(2)}\}_n$.
 \end{lemma}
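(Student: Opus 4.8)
The plan is to reduce everything to the $\liminf/\limsup$ characterization of FAP limits supplied by Lemma \ref{lemma.cvFAP}. Since a FAP $\pi$ is a FAP limit of a given sequence $\{\mu_n\}_n$ precisely when $\liminf_n \mu_n(f) \leq \pi(f) \leq \limsup_n \mu_n(f)$ holds for every $f\in\mathcal F_b$, it suffices to prove that the two sequences $\{\widetilde\pi_n\}_n$ and $\{\pi_n^{(2)}\}_n$ generate the same lower and upper bounds, that is, $\liminf_n \widetilde\pi_n(f) = \liminf_n \pi_n^{(2)}(f)$ and $\limsup_n \widetilde\pi_n(f) = \limsup_n \pi_n^{(2)}(f)$ for each $f\in\mathcal F_b$. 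Once these equalities are in hand, the two characterizing conditions become literally identical, whence the two sets of FAP limits coincide.

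To establish the equalities, I would fix $f\in\mathcal F_b$ and compute the difference directly. Writing $\widetilde\pi_n(f)=\gamma_n\pi_n^{(1)}(f)+(1-\gamma_n)\pi_n^{(2)}(f)$ and subtracting $\pi_n^{(2)}(f)$ yields $\widetilde\pi_n(f)-\pi_n^{(2)}(f)=\gamma_n\bigl(\pi_n^{(1)}(f)-\pi_n^{(2)}(f)\bigr)$. The key observation is that $\pi_n^{(1)}$ and $\pi_n^{(2)}$ are \emph{proper} distributions, so that $|\pi_n^{(1)}(f)|\leq\|f\|_\infty$ and $|\pi_n^{(2)}(f)|\leq\|f\|_\infty$ uniformly in $n$. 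Consequently $|\widetilde\pi_n(f)-\pi_n^{(2)}(f)|\leq 2\gamma_n\|f\|_\infty$, which tends to $0$ because $\gamma_n\to 0$.

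Having shown that the bounded real sequence $\widetilde\pi_n(f)-\pi_n^{(2)}(f)$ converges to $0$, I would conclude by the elementary fact that adding a null sequence changes neither the $\liminf$ nor the $\limsup$ of a bounded real sequence. Hence $\liminf_n \widetilde\pi_n(f)=\liminf_n \pi_n^{(2)}(f)$ and likewise for the $\limsup$, for every $f\in\mathcal F_b$, which is exactly the required identity of the two characterizing conditions.

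As for difficulty, there is essentially no obstacle, in line with the authors' remark that the statement is straightforward. The only point one must be careful to invoke is the uniform boundedness coming from properness (total mass one): it is precisely this that makes $\gamma_n\to 0$ enough to annihilate the first term, and it is worth noting that no convergence of the sequence $\{\pi_n^{(1)}\}_n$ itself is needed.
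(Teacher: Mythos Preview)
Your proof is correct and follows exactly the same approach as the paper: show that $\liminf_n\widetilde\pi_n(f)=\liminf_n\pi_n^{(2)}(f)$ and $\limsup_n\widetilde\pi_n(f)=\limsup_n\pi_n^{(2)}(f)$ for every $f\in\mathcal F_b$, then conclude by Lemma~\ref{lemma.cvFAP}. The paper simply states these equalities without justification, whereas you supply the (easy) bound $|\widetilde\pi_n(f)-\pi_n^{(2)}(f)|\leq 2\gamma_n\|f\|_\infty$; this is a welcome bit of extra detail but not a different argument.
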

 
 \begin{proof}
 	For any $f_1,...,f_p\in \mathcal F_b$,  then $(\pi_{n_k}^{(2)}(f_1),...,\pi_{n_k}^{(2)}(f_p) )$ converges to $(\pi(f_1),...,\pi(f_p))$ iff $(\widetilde\pi_{n_k}(f_1),...,\widetilde\pi_{n_k}(f_p) )$ converges to $(\pi(f_1),...,\pi(f_p))$. The result follows. \qed

 \end{proof}
 
 \begin{lemma}
 	\label{lemma.FAPKNslow}
 	Let $\{\pi_n\}_n$ be a sequence of proper priors and  $K_n$ be a non-decreasing sequence of compact sets such that  $\lim_n \pi_n(K_n)=0$, then the sequence defined by  $\widetilde\pi_n=\frac{1}{\pi_n(K_n^c)}\mathds{1}_{K_n^c}\pi_n$ has the same FAP limit points as $\{\pi_n\}_n$.
 \end{lemma}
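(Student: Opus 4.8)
The plan is to show that $\{\widetilde\pi_n\}_n$ and $\{\pi_n\}_n$ have, for every $f\in\mathcal F_b$, identical $\liminf$ and $\limsup$; the conclusion then follows immediately from the characterization of FAP limits in Lemma \ref{lemma.cvFAP}. First I would record a well-definedness point: since $\gamma_n:=\pi_n(K_n)\to 0$, we have $\pi_n(K_n^c)=1-\gamma_n>0$ for all large $n$, so $\widetilde\pi_n$ is a genuine proper distribution. FAP limits depend only on the tail behaviour of the sequence, so discarding the finitely many initial indices where the normalisation might fail is harmless.

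The core computation is a splitting of $\pi_n$ across the partition $\Theta=K_n\cup K_n^c$. For any $f\in\mathcal F_b$ I would write
\begin{equation*}
\pi_n(f)=\pi_n(\mathds 1_{K_n}f)+\pi_n(\mathds 1_{K_n^c}f),
\end{equation*}
and observe that, directly from the definition of $\widetilde\pi_n$, one has $\pi_n(\mathds 1_{K_n^c}f)=(1-\gamma_n)\,\widetilde\pi_n(f)$. Substituting yields
\begin{equation*}
\pi_n(f)-\widetilde\pi_n(f)=\pi_n(\mathds 1_{K_n}f)-\gamma_n\,\widetilde\pi_n(f).
\end{equation*}

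Both terms on the right vanish as $n\to\infty$: the first satisfies $|\pi_n(\mathds 1_{K_n}f)|\leq \|f\|_\infty\,\pi_n(K_n)=\|f\|_\infty\,\gamma_n\to 0$, and the second is bounded by $\gamma_n\,\|f\|_\infty\to 0$ since $\widetilde\pi_n$ is a probability and hence $|\widetilde\pi_n(f)|\leq\|f\|_\infty$. Thus $\pi_n(f)-\widetilde\pi_n(f)\to 0$ for every $f\in\mathcal F_b$, which forces $\liminf_n\widetilde\pi_n(f)=\liminf_n\pi_n(f)$ and $\limsup_n\widetilde\pi_n(f)=\limsup_n\pi_n(f)$. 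By Lemma \ref{lemma.cvFAP}, a FAP $\pi$ satisfies $\liminf_n\pi_n(f)\le\pi(f)\le\limsup_n\pi_n(f)$ for all $f$ if and only if it satisfies the analogous inequalities for $\{\widetilde\pi_n\}_n$; hence the two sequences share the same set of FAP limits.

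The argument is essentially mechanical and I do not anticipate any genuine obstacle. The only point requiring a little care is the well-definedness issue flagged at the outset, namely ensuring the normalising constant $\pi_n(K_n^c)$ is positive, which is guaranteed only for $n$ large; this is precisely why it matters that FAP limits are insensitive to finitely many terms of the sequence.
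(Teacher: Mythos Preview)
Your proof is correct and follows essentially the same approach as the paper: both split $\pi_n(f)$ across the partition $K_n\cup K_n^c$, use boundedness of $f$ together with $\pi_n(K_n)\to 0$ to show that $\pi_n(f)$ and $\widetilde\pi_n(f)$ share the same $\liminf$ and $\limsup$, and then invoke Lemma~\ref{lemma.cvFAP}. Your treatment of the well-definedness issue (only finitely many $n$ with $\pi_n(K_n^c)=0$) also mirrors the paper's remark.
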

 
 \begin{proof}
 	
 	First, note that $\{\pi_n\}_n$ is not defined when $\pi_n(K_n)=1$,  but this cannot occur more than a finite number of times.
 	For any $f\in \mathcal F_b$, $\pi_n(f)=$ $\mathds{1}_{K_n}\pi_n(f)+\mathds{1}_{K_n^c}\pi_n(f)=$ $\pi_n(\mathds{1}_{K_n}f)+\pi_n(K_n^c)\widetilde\pi_n(f)$. 
 	Since $f$ is bounded, $\lim_n\pi_n(\mathds{1}_{K_n}f)$ $=$ $0$.  Moreover, $\lim_n(K_n^c)=1$. Thus, for any $f_1,...,f_p\in \mathcal F_b$,   $(\pi_{n_k}(f_1),...,\pi_{n_k} (f_p) )$ converges to $(\pi(f_1),...,\pi(f_p))$ iff $(\widetilde\pi_{n_k}(f_1),...,\widetilde\pi_{n_k}(f_p) )$ converges to $(\pi(f_1),...,\pi(f_p))$.\qed
   	
 \end{proof}
 
 At the opposite of Lemma \ref{lemma.FAPKNslow}, the following lemma shows that if we consider the restriction of a sequence $\{\pi_n\}_n$ of a proper or improper distribution on a exhaustive increasing sequence $\{K_n\}_n$ of compact sets, we preserve the q-vague limits.
 \begin{lemma}\label{lemma.cvqvtronque}
 	Let $K_n$ be a non-decreasing sequence of compact sets such that $\cup_n K_n=\Theta$ and such that, for any compact $K$, there exists $N$ such that $K\subset K_N$. A sequence   $\{\pi_n\}_n$  of Radon measures converges q-vaguely  to the Radon measure $\pi$ if and only if $\widetilde \pi_n=\frac{1}{\pi_n(K_n)} \mathds{1}_{K_n}\pi_n$ converges q-vaguely to $\pi$.
 \end{lemma}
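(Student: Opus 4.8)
The plan is to reduce everything to the defining property (\ref{eq.cvqvague2}) of $q$-vague convergence, exploiting the fact that this notion is by construction insensitive to positive scalar factors. Since $\widetilde\pi_n=\frac{1}{\pi_n(K_n)}\mathds 1_{K_n}\pi_n$ differs from $\mathds 1_{K_n}\pi_n$ only by the scalar $1/\pi_n(K_n)$, that normalization can simply be folded into the scaling sequence supplied by (\ref{eq.cvqvague2}), so the whole content of the lemma is that restricting $\pi_n$ to $K_n$ does not alter its vague behaviour against test functions. The single fact driving this is that every $f\in\mathcal C_K$ has compact support $L$, and by hypothesis there is an $N$ with $L\subset K_N$; since $\{K_n\}_n$ is non-decreasing, $L\subset K_n$ and hence $\mathds 1_{K_n}f=f$ for all $n\geq N$.

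For the forward implication, suppose $\pi_n\to\pi$ $q$-vaguely, so there are $a_n>0$ with $a_n\pi_n(f)\to\pi(f)$ for every $f\in\mathcal C_K$. I would set $b_n=a_n\,\pi_n(K_n)$, which gives the identity $b_n\widetilde\pi_n(f)=a_n\,\pi_n(\mathds 1_{K_n}f)$. Fixing $f\in\mathcal C_K$ and choosing $N$ as above, for $n\geq N$ we have $\mathds 1_{K_n}f=f$, whence $b_n\widetilde\pi_n(f)=a_n\pi_n(f)\to\pi(f)$. As $f$ was arbitrary, $b_n\widetilde\pi_n$ converges vaguely to $\pi$, which is exactly $\widetilde\pi_n\to\pi$ $q$-vaguely.

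The converse is the mirror image: given $c_n>0$ with $c_n\widetilde\pi_n(f)\to\pi(f)$ for all $f\in\mathcal C_K$, I would put $a_n=c_n/\pi_n(K_n)$, so that $a_n\pi_n(f)=c_n\widetilde\pi_n(f)$ holds for all $n\geq N$ by the same eventual-equality argument, giving $a_n\pi_n\to\pi$ vaguely. The scalar factors $a_n,b_n,c_n$ are chosen once and for all, independently of $f$; only the threshold $N$ depends on $f$, which is harmless since vague convergence is checked one test function at a time.

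The only point requiring care --- and the place I expect to spend the most attention --- is the well-definedness of $\widetilde\pi_n$, i.e.\ that $\pi_n(K_n)>0$. This can fail for small $n$, but not for large $n$: because the $q$-vague limit $\pi$ is a non-null Radon measure, some $f\in\mathcal C_K$ has $\pi(f)>0$, so $a_n\pi_n(f)>0$ eventually, and since $\mathrm{supp}(f)\subset K_n$ for large $n$ this forces $\pi_n(K_n)>0$ for large $n$. As convergence only concerns the tail of the sequence, discarding the finitely many indices where $\widetilde\pi_n$ is undefined causes no difficulty, exactly as in the convention used for Lemma \ref{lemma.FAPKNslow}.
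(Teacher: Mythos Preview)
Your proof is correct and follows exactly the same approach as the paper's: absorb the normalization $1/\pi_n(K_n)$ into the scaling constants and use that $\mathds 1_{K_n}f=f$ for all sufficiently large $n$ whenever $f\in\mathcal C_K$. You are in fact more careful than the paper, which neither spells out the converse direction nor addresses the eventual positivity of $\pi_n(K_n)$.
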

 \begin{proof}
 	Assume that  $\pi_n$ converges q-vaguely to $\pi$, then there exists some positive scalars $\{a_n\}_n$ such that for any $f$ in $\mathcal C_K$, $\lim_n a_n\pi_n(f)=\pi(f)$. Put $\widetilde a_n= a_n\,\pi_n(K_n)$ and denote by $K_f$ a compact set that includes the support of $f$. Then, there exists an integer $N$ such that $K_f\subset K_n$ for $n>N$.
 	Therefore, for $n>N$,  $\widetilde a_n\widetilde \pi_n(f)=a_n\pi_n(f)$. The result and its reciprocal follow.\qed 
 \end{proof}

  \section*{Appendix B}
 
 We prove here Proposition \ref{prop.PoissonlimitFAP} of section
 \ref{section.poisson}.
 
 In order to show that  $\pi$ is SI-uniform,  we consider   $\pi_n$ as
 a distribution  on the set of positive and negative integers, extending it by $0$ on the non-positive
 integers. Define by $\pi^{(k)}_n$  the shifted distribution: $\pi^{(k)}_n (A) = \pi_n
 (A+k)$, for any subset $A$ of the set of integers.  One knows that
 $\|\pi^{(k)}_n - \pi_n \|_{TV} \leq \frac{k}{\sqrt{2\pi n}}$, where
 $\|\cdot\|_{TV}$ is the total variation norm. Therefore, for any subset
 of $\mathbb N$, $\lim_{n\to\infty} |\pi_n(A+k) - \pi_n(A) | =0$. Letting
 $n$ go to infinity, we deduce that, for any FAP limit point $ \pi$ of $\pi_n$,
 and any integer $k$ : $ \pi (A+k) =  \pi (A)$.

 The fact that $\pi$ is BS-uniform comes from an easy
 adaptation of the Hoeffding inequality in that context. Let $(X_k)_{k
 	\in \mathbb N}$ be a Bernoulli scheme, of parameter $p$, and denote by
 $\mathbb P$ the associated probability.
 Hoeffding inequality gives, that, for any $n$~:
 
 $$
 \mathbb P
 \left\{
 \bigg|
 \sum_{k=0}^{\infty} e^{-n}\frac{k^n}{n!} (X_k(\omega) -p)
 \bigg|
 \geq
 t
 \right\}
 \; \leq \;
 2 e^{-2 c \sqrt{2\pi n}\, t^2},
 $$
 for some positive constant $c$. The expected conclusion is then
 obtained thanks to the  Borel-Cantelli lemma.
 
 The fact that some of the  limit points $\pi$ of $\{\pi_n\}_n$ are not LRF uniform is a
 direct  consequence of the following lemma.

 \begin{lemma}
 	For any $0\leq p,p'\leq 1$, there exists a set $A$ and some FAP limit points  ${\pi}$ of  $\{{\pi}_n\}_n$  such that $LRF(A)=p$ and ${\pi}(A)=p'$.
 \end{lemma}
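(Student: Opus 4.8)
The goal is to construct, for arbitrary target values $p, p' \in [0,1]$, a single set $A$ together with a FAP limit $\pi$ of the Poisson sequence $\{\pi_n\}_n$ such that the \emph{limiting relative frequency} $\mathrm{LRF}(A) = p$ while the FAP assigns $\pi(A) = p'$. The separation of these two numbers is exactly what exhibits the failure of LRF-uniformity: LRF is governed by the asymptotic density of $A$ near infinity in the ordinary counting sense, whereas the Poisson FAP limit ``sees'' $A$ through the weight $\pi_n(A) = \sum_{k \in A} e^{-n} n^k / k!$, which concentrates its mass in a window of width $O(\sqrt n)$ around $k = n$. The plan is to build $A$ so that these two probing mechanisms read off genuinely different quantities.

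First I would design $A$ as a union of long blocks of consecutive integers whose local density alternates between two regimes on widely separated scales. To control the LRF, I arrange the blocks so that the running average $\#\{k \le N : k \in A\}/(N+1)$ converges to $p$; this is a standard density-engineering argument where one inserts blocks at geometrically growing positions so that the Cesàro-type average is forced to the target. Simultaneously, to control $\pi(A)$, I exploit the fact that the Poisson mass $\pi_n$ is essentially supported on the interval $I_n = [\,n - M\sqrt n,\; n + M\sqrt n\,]$ for large $M$ (by the normal approximation / Chebyshev, $\pi_n(I_n^c) \to 0$). The key is to choose the block structure so that, along a carefully chosen subsequence $n_j \to \infty$, the local density of $A$ \emph{within the window} $I_{n_j}$ tends to $p'$ rather than to $p$. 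Because the window width $\sqrt n$ is negligible compared to the scale on which the global density is accumulated, I can make the ``local window density'' and the ``global Cesàro density'' independent design parameters.

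Having arranged $\lim_j \pi_{n_j}(A) = p'$, I then invoke the existence theory for FAP limits from Section~\ref{section.FAPlimit}: since $p' \in [\liminf_n \pi_n(A), \limsup_n \pi_n(A)]$ is realized as an actual subsequential limit of the sequence $\{\pi_n(A)\}_n$, I apply the Hahn–Banach construction (or equivalently Lemma~\ref{lemma.cvFAP}) to produce a FAP limit $\pi$ with $\pi(A) = p'$. Concretely, one first passes to the subsequence $\{\pi_{n_j}\}_j$ and takes a FAP limit of it; by Lemma~\ref{lemma.cvFAP} any FAP limit $\pi$ of $\{\pi_{n_j}\}_j$ satisfies $\liminf_j \pi_{n_j}(A) \le \pi(A) \le \limsup_j \pi_{n_j}(A) = p'$, and by forcing the subsequential limit to exist and equal $p'$ we pin down $\pi(A) = p'$. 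Finally I check that $\mathrm{LRF}(A) = p$ directly from the block construction, which is independent of the FAP machinery.

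The main obstacle is the quantitative coordination between the two scales: I must verify that inserting a block tuned to adjust the window-density near a chosen $n_j$ does not disturb the global Cesàro average away from $p$, and conversely that the long stretches enforcing the global density $p$ contribute negligibly to $\pi_{n_j}(A)$ once the window $I_{n_j}$ is correctly positioned. This requires choosing the block lengths and the subsequence $\{n_j\}$ so that each $n_j$ sits inside a block of the ``$p'$-type'' that is long compared to $\sqrt{n_j}$ (so the window $I_{n_j}$ lies essentially within it), while the $p'$-type blocks are themselves sparse enough on the global scale that they do not perturb the density target $p$. Balancing these two asymptotic requirements — local concentration of Poisson mass versus global Cesàro averaging — is the technical heart of the argument; the remainder is routine concentration (normal approximation for $\pi_n$) and the already-established FAP existence results.
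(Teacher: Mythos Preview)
Your proposal is correct and follows essentially the same route as the paper: exploit the scale separation between the Poisson concentration window of width $O(\sqrt n)$ around $n$ and the $O(n)$ scale governing $\mathrm{LRF}$, build $A$ from blocks so that along a subsequence $n_j$ the Poisson mass captured equals $p'$ while the global density is $p$, and then pass to a FAP limit along that subsequence (which is automatically a FAP limit of the full sequence by Lemma~\ref{lemma.cvFAP}). The paper makes this concrete by taking $n_j = 4^j$, starting from an arbitrary set $A'$ with $\mathrm{LRF}(A')=p$, and then modifying $A'$ on a density-zero set $B'=\bigcup_k [\,4^k + 2^k a,\; 4^k + 2^k b\,]$ with $a,b$ chosen via the CLT so that $\pi_{4^k}(B')\to p'$; this ``modify on a null-density set'' decomposition cleanly decouples the two constraints, but the underlying mechanism is exactly the one you describe.
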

 \begin{proof}
 	First note  that, for any set $A'$, $LRF(A')=p$ if, and only
 	if, $\sharp \{ k \leq n, \, k\in A'\} =$ $ pn + o(n)$. Therefore, for any
 	set $A$ with $LRF(A)=p$ and for any set $B$ such that $\sharp \{k\leq n
 	, \, k\in B\} = o(N)$, one has both $LRF(A\cup B) = p$ and
 	$LRF(A\setminus B ) = p$. Take now for set $B$ the following~:
 	$$
 	B = \bigcup_{k\in\mathbb N} \big\{ u\in\mathbb N \; :\; 4^k -2^k k
 	\leq u \leq  4^k+2^k k \big\}.
 	$$
 	For that $B$, one has~:
 	$$
 	\limsup_{n\to\infty} \frac{\sharp\{k\leq n, \, k \in B\}}{n+1} =
 	\lim_{k \to\infty} \frac{\sum_{i=0}^{k} 2^{i+1} i }{4^k + 2^k k}
 	\; \leq \; \lim_{k \to\infty}  \frac{(k+1)2^{k+2}}{4^k } = 0,
 	$$
 	and thus $LRF(B) =0$. However, $\pi_{4^k} (B)$ converges to $1$.
 	Indeed, if $U_k$ is some random variable with law $\pi_{4^k}$, one has~:
 	$$
 	\pi_{4^k}
 	\big(
 	\big\{ u\in\mathbb N \; :\; 4^k -2^k k \leq u \leq  4^k+2^k k \big\}
 	\big)
 	=
 	\mathbb P
 	\bigg( \frac{U_k -4^k}{\sqrt{4^k}} \in
 	\big[ - k \, ; \, k
 	\big]
 	\bigg).
 	$$
 	The right-hand side term above converges to $1$ thanks to the
 	central limit theorem. Hence $LRF(A\cup B) = LRF(A\setminus B) = p$
 	while $\pi_{4^k}(A\cup B)$ converges to $1$, and $\pi_{4^k}(A\setminus
 	B) $ converges to $0$. Now, for any $p'\in[0;1]$, choose two numbers
 	$a<b$, so that
 	$
 	%\displaystyle
 	p'=\int_a^b \frac{e^{-u^2/2}}{\sqrt{2\pi}} du $. Take the set $B'$
 	to be~:
 	$$
 	B' = \bigcup_{k\in\mathbb N}
 	\big\{
 	u\in\mathbb N \; :\;
 	4^k +2^k \text{max}(-k,a) \leq u \leq  4^k+2^k \text{min}(k,b)
 	\big\},
 	$$
 	then $LRF(B')=0$ again and $\pi_{4^k} (B')$ converges to $p'$
 	, still thanks to the central limit theorem. Let $A = (A'\setminus
 	B)\cup B'$. Then $LRF (A) =p $ and
 	$\lim_{k \to \infty} \pi_{4^k} (A) =p'$. Now, any FAP limit point $\pi$
 	of subsequence $\{\pi_{4^k}\}_k$ is also a FAP limit point of $\{\pi_k\}_k$. Hence, $\pi$ is
 	 SI-uniform and BS-uniform,
 	but one has $ \pi (A)= p'$ and $LRF(A)=p$.\qed
 \end{proof}

%\begin{acknowledgements}
%If you'd like to thank anyone, place your comments here
%and remove the percent signs.
%\end{acknowledgements}

% BibTeX users please use one of
%\bibliographystyle{spbasic}      % basic style, author-year citations
%\bibliography{}   % name your BibTeX data base

% Non-BibTeX users please use

\end{document}